\documentclass[a4paper]{article}
\usepackage{amssymb}
\usepackage{amsmath}
\usepackage{amsthm}
\usepackage{color}
\usepackage{amscd}
\usepackage{graphicx}
\usepackage{pdflscape}
\usepackage{longtable,booktabs}
\usepackage{float}
\usepackage[dvipsnames]{xcolor}
\usepackage{placeins}

\newtheorem{thm}{Theorem}[section]
\newtheorem*{mainthm}{Main Theorem}

\newtheorem{lemma}[thm]{Lemma}
\newtheorem{algorithm}[thm]{Algorithm}

\newtheorem{remark}[thm]{Remark}

\newtheorem{definition}[thm]{Definition}

\newcommand{\SL}{\mathrm{SL}}
\newcommand{\Sing}{\mathrm{Sing}}

\newcommand{\Supp}{\mathrm{Supp}}

\newcommand{\C}{\mathbb{C}}

\newcommand{\G}{\mathbb{G}}
\newcommand{\PP}{\mathbb{P}}
\newcommand{\wt}{\mathrm{wt}}

\newcommand{\lct}{\mathrm{lct}}

\newcommand{\Q}{\mathbb{Q}}
\newcommand{\Z}{\mathbb{Z}}
\newcommand{\R}{\mathbb{R}}
\newcommand{\A}{\mathbb{A}}
\newcommand{\Span}{\mathrm{Span}}

\begin{document}

\title{Uniformity of the minimal exponent on the GIT boundary of projective hypersurfaces}
\author{Yasutaka Shibata}
\date{}
\maketitle

\noindent E-mail address: \texttt{yasutaka@shibata-math.com}

\begin{abstract}
Let $X$ be a non-stable (in the sense of GIT) hypersurface of degree $d \ge 3$ in the projective space $\PP^{n}$. S.~G.~Park \cite{Par25} proved that the global minimal exponent of $X$ satisfies the inequality $\widetilde{\alpha}(X) \le \frac{n+1}{d}$. In this paper, we establish the uniform sharpness of this inequality: we prove that $\widetilde{\alpha}(X) = \frac{n+1}{d}$ holds for a general member $X$ of every irreducible component of the strictly semistable locus. Moreover, a general point of each irreducible component of the GIT boundary $\partial \mathcal{M}^{\mathrm{GIT}}$ admits a strictly semistable zero-weight representative with this minimal exponent. By the lower semicontinuity of the minimal exponent on the affine cone \cite{MP20a} and a theorem of Musta\c{t}\u{a}--Popa \cite{MP20b} for general linear combinations of monomials, the problem is reduced to computing the log canonical thresholds, at the cone vertex, of the monomial ideals associated with the maximal strictly semistable supports with respect to a maximal torus and their weight-zero parts. Since the barycenter of the degree-$d$ simplex lies in the convex hull of each of these supports, Howald's formula \cite{How01} forces these thresholds to be uniformly equal to $\frac{n+1}{d}$.
\end{abstract}

\section*{Introduction}

In \cite{Shi1,Shi2}, the author carried out a detailed analysis of the GIT boundary of the moduli spaces of cubic fivefolds and quintic threefolds. In the course of those investigations, a striking phenomenon was observed: the global minimal exponents of the general representatives considered for the different boundary components uniformly take the critical value $\frac{n+1}{d}$. The purpose of the present paper is to provide a conceptual explanation for this phenomenon and to prove that this uniformity holds in general for projective hypersurfaces of degree $d\geq3$ in $\PP^n$.

Let $\Sigma^{ss}$ denote the strictly semistable locus of degree-$d$ hypersurfaces in $\PP^n$ over $\C$, and let
$\pi:\Sigma^{ss}\to\partial\mathcal M^{\mathrm{GIT}}$
be the quotient map.

\begin{mainthm}
Assume $n\geq1$ and $d\geq3$.
A general member $X$ of every irreducible component of $\Sigma^{ss}$ satisfies
$\widetilde{\alpha}(X)=\frac{n+1}{d}$.
Moreover, for every irreducible component $B$ of
$\partial\mathcal M^{\mathrm{GIT}}$, there exist a nonempty open subset
$B^\circ\subseteq B$ and a nontrivial one-parameter subgroup
$\lambda:\G_m\to\SL(n+1)$ such that every $b\in B^\circ$ admits a
strictly semistable representative $[f]$ satisfying
\[
  \pi([f])=b,\qquad
  \lambda(t)\cdot f=f\quad(t\in\G_m),\qquad
  \widetilde{\alpha}(V(f))=\frac{n+1}{d}.
\]
\end{mainthm}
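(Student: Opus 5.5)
We follow the strategy outlined in the abstract. By the Hilbert--Mumford criterion, $f$ is non-stable if and only if, after an $\SL(n+1)$-change of coordinates, its support $\Supp(f)\subseteq\{a\in\Z_{\ge0}^{n+1}:|a|=d\}$ lies in a closed half-space $H_w=\{a:\langle w,a\rangle\ge0\}$ for some nonzero $w\in\Z^{n+1}$ with $\sum w_i=0$. Up to the Weyl group there are only finitely many maximal such supports $S$; call them the \emph{maximal strictly semistable supports}. Each irreducible component of $\Sigma^{ss}$ is the closure of $\SL(n+1)\cdot\Span\{x^a:a\in S\}$ for one of these $S$. Hence a general member of a component is, up to coordinates, a general linear combination $f_S=\sum_{a\in S}c_ax^a$. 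Semistability of this general member should come from the hypothesis that the component meets the semistable locus. The barycenter condition below is the convex-geometric form of this.

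\textbf{Upper bound.} We take Park's inequality $\widetilde\alpha(X)\le\frac{n+1}{d}$ from \cite{Par25} as given. It therefore suffices to prove $\widetilde\alpha(V(f_S))\ge\frac{n+1}{d}$. Recall that $\widetilde\alpha(X)$ is the minimal exponent of the cone $f$ at the origin of $\A^{n+1}$, and that it dominates $\lct$. By the Mustaţă--Popa theorem \cite{MP20b}, for a general linear combination of monomials generating an ideal $\mathfrak a_S$, the minimal exponent at $0$ is at least $\min\{\lct_0(\mathfrak a_S),\dots\}$, and in fact equals $\lct_0(\mathfrak a_S)$ when this is $\le1$. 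Howald's formula \cite{How01} gives $\lct_0(\mathfrak a_S)=1/t$, where $t$ is minimal with $t\cdot(1,\dots,1)\in\mathrm{conv}(S)+\R^{n+1}_{\ge0}$. If the barycenter $b=\frac{d}{n+1}(1,\dots,1)$ lies in $\mathrm{conv}(S)$, then $t\le\frac{n+1}{d}$ and $\lct\ge\frac{n+1}{d}$. If $\frac{n+1}{d}>1$, we use the Mustaţă--Popa statement in its form for minimal exponents, which uses the Newton polyhedron value $1/t$ with nondegeneracy. Lower semicontinuity \cite{MP20a} then propagates the bound to general members of the component.

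\textbf{Barycenter in the convex hull.} This is the key combinatorial step. Suppose $b\notin\mathrm{conv}(S)$. Separation gives $w$ with $\langle w,a\rangle>\langle w,b\rangle$ for all $a\in S$. Subtracting the mean of $w$, we get $\sum w_i=0$ and $\langle w,b\rangle=0$, so $S$ lies in an open half-space. This contradicts semistability: the general $f_S$ would be unstable by Hilbert--Mumford with respect to the diagonal torus. We must check that semistability of a general $f_S$ may be tested on this torus. For general coefficients the maximal torus is the right one, since for the component's general member the destabilizing or stabilizing data are diagonal in the chosen coordinates.

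\textbf{Second statement.} For a boundary component $B$, a general point is represented by the closed orbit, which is polystable. By Luna/Matsushima its stabilizer is reductive and, being strictly semistable, contains a nontrivial torus $\lambda$. The representative is therefore $\lambda$-fixed, i.e.\ supported on $S_0=S\cap w^\perp$ for a suitable maximal support $S$ and weight $w$ with $\lambda=\lambda_w$. The barycenter lies in $\mathrm{conv}(S_0)$: polystability for the centralizing torus forces $b$ into the relative interior of $\mathrm{conv}(S_0)$, by the same separation argument applied inside $w^\perp$. Howald and Mustaţă--Popa then give $\widetilde\alpha=\frac{n+1}{d}$ for a general combination on $S_0$. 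Openness comes from lower semicontinuity.

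\textbf{Main obstacle.} The delicate points are two. First, justifying that the general element of each component (respectively of each $B$) is a general combination of monomials of a single torus-adapted support, so that \cite{MP20b} applies. Second, handling the case $\frac{n+1}{d}>1$, where lct alone does not determine $\widetilde\alpha$. For the latter we would first invoke the Newton nondegenerate version of \cite{MP20b}.
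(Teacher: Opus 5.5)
Your first assertion follows the paper's route, but the second assertion has a genuine gap. The Matsushima argument does give, for each $b\in B$, a polystable representative $f_b$ with $\lambda_{w_b}(t)\cdot f_b=f_b$ after conjugating into $T$. It does not give what the statement asks for, for two reasons.

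First, $w_b$ depends on $b$, whereas a single $\lambda$ must serve every $b\in B^\circ$. Second, and more seriously, $f_b$ is one particular point of $\PP(W(w_b))$. Musta\c{t}\u{a}--Popa plus Howald computes $\widetilde\alpha$ only for a \emph{general} linear combination of the monomials in $I(w)_{=0}$.

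This is not a formality. The chordal cubic threefold is strictly semistable with closed orbit, so by your own argument it is fixed by a nontrivial 1-PS. Yet it has $\widetilde\alpha=\frac32<\frac53$. Your ``openness from lower semicontinuity'' only makes the good locus $U$ open in $\PP(W(w))$. It says nothing about whether $\pi(U\cap\PP(W)^{ss})$ contains an open subset of $B$.

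What is missing is a dominance statement: for one fixed $w$, the set $\pi(\PP(W(w))\cap\PP(W)^{ss})$ is dense in $B$. The paper obtains this from its zero-weight reduction lemma in Section~3. There $\lambda_{\mathbf r}(t)\cdot f\to f_0$ and separatedness of $\mathcal M^{\mathrm{GIT}}$ give $\pi([f])=\pi([f_0])$, so $\Phi_k=\overline{\pi(\PP(W_k)\cap\PP(W)^{ss})}$. Chevalley's theorem then makes $\pi(U_k^W)$ dense and constructible in $B=\Phi_k$.

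Your route can be repaired in the same spirit:
\begin{itemize}
\item There are only finitely many subsets $I(w)_{=0}\subseteq I$.
\item Each image $\pi(\PP(W(w))\cap\PP(W)^{ss})$ is constructible, irreducible, and contained in $\partial\mathcal M^{\mathrm{GIT}}$, since its points are $\lambda_w$-fixed. By your stabilizer argument, the union of these images contains $B$.
\item Irreducibility of $B$ then gives $B=\overline{\pi(\PP(W(w))\cap\PP(W)^{ss})}$ for one $w$, which also fixes a single $\lambda=\lambda_w$.
\item Chevalley applied to $\pi(U\cap\PP(W)^{ss})$ finishes.
\end{itemize}
For that $w$, $\eta\in\mathrm{Conv}(I(w)_{=0})$ follows simply from $W(w)$ containing a semistable point. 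Your relative-interior claim via polystability is unnecessary.

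The first part needs three smaller repairs.
\begin{itemize}
\item The claim that $\widetilde\alpha(X)$ \emph{is} the minimal exponent of the cone at the origin is false in general: for smooth $X$ the two values are $\infty$ and $\frac{n+1}{d}$. You need only $\widetilde\alpha(X)\ge\widetilde\alpha_v(f)$. The paper proves this by identifying the germ of $C(X)$ over $p$ with $(X,p)\times(\A^1,0)$ and applying lower semicontinuity along the ruling through $v$.
\item The barycenter gives $t\le\frac{d}{n+1}$, not $t\le\frac{n+1}{d}$.
\item No case split at $1$ is needed. The paper uses Musta\c{t}\u{a}--Popa in the form $\widetilde\alpha_v(f)=\lct_v(\mathfrak a_S)$, with the threshold of the ideal, which may exceed $1$.
\end{itemize}
Your worry about testing semistability on the diagonal torus is also moot: a single destabilizing 1-PS already forces instability.
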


This is a consequence of the more precise statement in
Theorem~\ref{thm:main}.
The quotient-side conclusion is an existence statement for zero-weight
representatives. It does not assert that these representatives are polystable.

The key idea of our proof is to combine a finite covering by monomial families with a uniform computation of the minimal exponent at the vertex of the affine cone. We consider a general member $X=V(f)\subset\PP^n$ of one of the linear families constructed below and its affine cone $C(X)\subset\A^{n+1}$. Although the cone can have non-isolated singularities, its vertex $v$ is a point where the local minimal exponent on $C(X)$ attains its minimum. For such a general member, we show that this minimum is exactly $\frac{n+1}{d}$.

Indeed, for such a general member and any singular point $p\in\Sing(X)$, consider the ruling line $\ell\subset C(X)$ passing through $v$ and lying over $p$. Invariance under smooth pullback and the lower semicontinuity of the minimal exponent along $\ell$ \cite{MP20a} yield
\[
  \widetilde{\alpha}_{X,p}
  \geq \widetilde{\alpha}_v\bigl(C(X)\bigr)
  =\frac{n+1}{d}.
\]
Taking the minimum over all $p\in\Sing(X)$, we obtain the global lower bound
\[
  \widetilde{\alpha}(X)
  =\min_{p\in\Sing(X)}\widetilde{\alpha}_{X,p}
  \geq\frac{n+1}{d}.
\]

More generally, Park \cite[Theorem~6.1]{Par25} proves the formula
\[
  \widetilde{\alpha}_{v}(f)
  =
  \min\left\{\widetilde{\alpha}(X),\frac{n+1}{d}\right\}.
\]
The uniformity in our argument comes from the common barycenter condition
on the monomial supports in the finite covering.

The minimal exponent at the vertex $v$ is computed as follows. We first construct a finite covering using the maximal strictly semistable supports $I(\mathbf r_k)_{\geq0}$, and show that a general point on each boundary component can be represented by a defining polynomial $f$ with zero-weight support $\Supp(f)=I(\mathbf r_k)_{=0}$. Both supports contain the barycenter
$\eta=(\frac{d}{n+1},\ldots,\frac{d}{n+1})$
in their convex hulls. By a theorem of Musta\c{t}\u{a} and Popa \cite{MP20b}, for a general linear combination $f$ of the monomials in $I(\mathbf r_k)_{=0}$, the computation of the minimal exponent of $C(X)$ at $v$ is reduced to computing the log canonical threshold $\lct_v(\mathfrak a)$ of the associated monomial ideal $\mathfrak a$. Next, by Howald's formula \cite{How01}, $\lct_v(\mathfrak a)$ is given by the reciprocal $1/\tau$ of the parameter $\tau>0$ at which the diagonal ray $t(1,1,\ldots,1)$ intersects the boundary of the Newton polyhedron
\[
  P(\mathfrak a)
  =\mathrm{Conv}(I(\mathbf r_k)_{=0})+\R_{\geq0}^{n+1}.
\]
Since every monomial in $I(\mathbf r_k)_{=0}$ has degree $d$ and the barycenter $\eta$ lies in $\mathrm{Conv}(I(\mathbf r_k)_{=0})$, this intersection point is precisely $\eta$, which gives $\tau=\frac{d}{n+1}$ and hence $\lct_v(\mathfrak a)=\frac{n+1}{d}$. The same computation applies to the maximal strictly semistable support $I(\mathbf r_k)_{\geq0}$.

On the other hand, the supports of these families imply that $X$ is not GIT stable, so Park's theorem \cite{Par25} gives the upper bound
\[
  \widetilde{\alpha}(X)\leq\frac{n+1}{d}.
\]
Combining this with the lower bound above, we conclude that
$\widetilde{\alpha}(X)=\frac{n+1}{d}$.

\section{Numerical criterion for projective hypersurfaces}\label{num}
Throughout the paper, we work over $\C$ and fix integers $n \ge 1$ and $d \ge 3$. Hypersurfaces are regarded as the effective Cartier divisors defined by their equations; reducedness is not assumed. In this section, we review the numerical criterion for stability or semistability of projective hypersurfaces. We use the following notation.

\begin{itemize}
\item For a vector $\mathbf{x} \in \mathbb{Q}^{n+1}$, $\mathrm{wt}(\mathbf{x})=\sum_{k=0}^{n} x_{k}$ is called the weight of $\mathbf{x}$.

\item We define
\[
\begin{aligned}
\mathbb{Z}^{n+1}_{\ge 0}
&=\{\mathbf{x}\in\mathbb{Z}^{n+1}\mid x_k\ge0\ (k=0,1,\ldots,n)\},\\
\mathbb{Z}^{n+1}_{(d)}
&=\{\mathbf{x}\in\mathbb{Z}^{n+1}\mid\mathrm{wt}(\mathbf{x})=d\},\\
I&=\mathbb{Z}^{n+1}_{(d)}\cap\mathbb{Z}^{n+1}_{\ge0},
\end{aligned}
\]
where $I$ is the set of lattice points of the degree-$d$ simplex.

\item For $\mathbf{r} \in \mathbb{Q}^{n+1}$, we define $I(\mathbf{r})_{\ge 0}=\{ \mathbf{i} \in I \mid \mathbf{r} \cdot \mathbf{i} \ge 0 \}$, $I(\mathbf{r})_{> 0}=\{ \mathbf{i} \in I \mid \mathbf{r} \cdot \mathbf{i} > 0 \}$, and $I(\mathbf{r})_{=0}=\{ \mathbf{i} \in I \mid \mathbf{r} \cdot \mathbf{i} = 0 \}$, where $\cdot$ denotes the standard inner product of vectors.

\item We set $\eta=(\frac{d}{n+1},\frac{d}{n+1}, \cdots, \frac{d}{n+1}) \in \mathbb{Q}^{n+1}$ and call it the barycenter of the simplex $\mathrm{Conv}(I)$.

\item A vector $\mathbf{r} \in \mathbb{Z}^{n+1}$ is said to be reduced when there is no integer $\alpha$ such that $|\alpha| \ge 2$ and $\frac{1}{\alpha}\mathbf{r} \in \mathbb{Z}^{n+1}$.

\item Let $W = \mathbb{C}[x_{0},\cdots,x_{n}]_{d}$ be the vector space of homogeneous polynomials of degree $d$. For a polynomial $f=\sum_{\mathrm{wt}(\mathbf{i})=d}a_{\mathbf{i}}x^{\mathbf{i}} \in W$, we define the support of $f$ by $\mathrm{Supp}(f)=\{ \mathbf{i} \in I \mid a_{\mathbf{i}} \neq 0\}$.

\item The projective space $\PP(W)$ parametrizes hypersurfaces of degree $d$ in $\PP^{n}$.

\item We denote by $\PP(W)^{ss}$ and $\PP(W)^{s}$ the loci of GIT semistable and GIT stable hypersurfaces, respectively, with respect to the natural action of $\SL(n+1)$.

\item We set $\Sigma^{ss} = \PP(W)^{ss} \setminus \PP(W)^{s}$, $\mathcal{M}^{\mathrm{GIT}} = \PP(W)^{ss} /\!/ \SL(n+1)$, and $\partial \mathcal{M}^{\mathrm{GIT}} = \Sigma^{ss} /\!/ \SL(n+1)$, where $\Sigma^{ss}$ is the strictly semistable locus and $\partial \mathcal{M}^{\mathrm{GIT}}$ is the GIT boundary.

\item We denote the quotient map by $\pi \colon \PP(W)^{ss} \to \mathcal{M}^{\mathrm{GIT}}$, and use the same symbol for its restriction $\pi \colon \Sigma^{ss} \to \partial \mathcal{M}^{\mathrm{GIT}}$.

\end{itemize}

We fix a maximal torus $T$ of $\SL(n+1)$ and choose homogeneous
coordinates $x_0,\ldots,x_n$ that diagonalize its action on the space
of linear forms. We identify a one-parameter subgroup
(1-PS for short) $\lambda_{\mathbf r}:\mathbb G_m\to T$ with its
weight vector $\mathbf r=(r_0,\ldots,r_n)\in\mathbb Z^{n+1}_{(0)}$,
using the convention
\[
  \lambda_{\mathbf r}(t)\cdot x_j=t^{r_j}x_j.
\]
Thus
$\lambda_{\mathbf r}(t)\cdot x^{\mathbf i}
=t^{\mathbf r\cdot\mathbf i}x^{\mathbf i}$.

\begin{definition}
Let $s$ be a subset of $I$. We say that $s$ is not stable (resp. unstable) with respect to $T$ when $s \subseteq I(\mathbf{r})_{\ge 0}$ (resp. $s \subseteq I(\mathbf{r})_{> 0}$) for some nontrivial 1-PS $\mathbf{r} \neq 0$.
For $0 \neq f \in \mathbb{C}[x_{0},\cdots,x_{n}]_{d}$, we say that $f$ is not stable (resp. unstable) with respect to $T$ when $\mathrm{Supp}(f) \subseteq I$ is not stable (resp. unstable) with respect to $T$. 
\end{definition}

The following theorem is the numerical criterion for stability via the language of convex geometry.

\begin{thm}\label{numerical}
The projective hypersurface of degree $d$ defined by $0 \neq f \in W$ is not stable (resp. unstable) if and only if there exists an element $\sigma \in \SL(n+1)$ such that $f^{\sigma}$ is not stable (resp. unstable) with respect to $T$. 
In particular, $f$ is strictly semistable if and only if 
\begin{description}
\item[(1)] There exists $\sigma \in \SL(n+1)$ such that $f^{\sigma}$ is not stable with respect to $T$, and

\item[(2)] For any $\sigma \in \SL(n+1)$, $f^{\sigma}$ is semistable with respect to $T$.
\end{description}

\end{thm}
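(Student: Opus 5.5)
The plan is to derive the statement from the Hilbert--Mumford numerical criterion together with the conjugacy of maximal tori in $\SL(n+1)$. First, recall the Hilbert--Mumford criterion for the linearized action of $\SL(n+1)$ on $\PP(W)$ with $\mathcal O(1)$. It says that $[f]$ is not stable (resp. unstable) iff there is a nontrivial 1-PS $\lambda:\G_m\to\SL(n+1)$ with $\mu(f,\lambda)\ge0$ (resp. $>0$). Here $\mu$ is computed in coordinates diagonalizing $\lambda$, and the sign convention is matched to $\lambda_{\mathbf r}(t)\cdot x^{\mathbf i}=t^{\mathbf r\cdot\mathbf i}x^{\mathbf i}$. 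With this convention, $\mu(f,\lambda_{\mathbf r})=\min\{\mathbf r\cdot\mathbf i\mid \mathbf i\in\Supp(f)\}$. The limit $\lim_{t\to0}\lambda_{\mathbf r}(t)\cdot f$ exists in $W$ iff all exponents in the support satisfy $\mathbf r\cdot\mathbf i\ge0$, and it is zero iff all satisfy $\mathbf r\cdot\mathbf i>0$. So for a 1-PS of $T$, the condition ``$\mu\ge0$'' (resp. ``$>0$'') is precisely $\Supp(f)\subseteq I(\mathbf r)_{\ge0}$ (resp. $I(\mathbf r)_{>0}$), which is the definition of ``not stable (resp. unstable) with respect to $T$''.

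Second, I reduce an arbitrary 1-PS to one in $T$. Every 1-PS $\lambda$ of $\SL(n+1)$ has image in some maximal torus, and all maximal tori are conjugate. Hence there is $\sigma\in\SL(n+1)$ with $\sigma^{-1}\lambda\sigma=\lambda_{\mathbf r}$ landing in $T$, and the conjugate is nontrivial iff $\lambda$ is. Then I use the equivariance $\mu(f,\lambda)=\mu(f^{\sigma},\sigma^{-1}\lambda\sigma)$, where $f^\sigma$ denotes the translate matching the paper's convention. This identity follows directly from $\lambda(t)\cdot f=\sigma\cdot(\lambda_{\mathbf r}(t)\cdot(\sigma^{-1}\cdot f))$. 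It gives both directions of the first claim: a destabilizing $\lambda$ for $f$ yields a destabilizing $\lambda_{\mathbf r}\subset T$ for $f^\sigma$. Conversely, $\Supp(f^\sigma)\subseteq I(\mathbf r)_{\ge0}$ (resp. $>0$) yields the destabilizing 1-PS $\sigma\lambda_{\mathbf r}\sigma^{-1}$ for $f$.

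Third, the ``in particular'' is formal. Strictly semistable means semistable and not stable. Condition (1) is ``not stable'' by the first part. By the unstable version of the first part, semistability means that for no $\sigma$ is $f^\sigma$ unstable with respect to $T$, i.e.\ $f^\sigma$ is semistable with respect to $T$ for all $\sigma$. That is condition (2).

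I expect the main obstacle to be bookkeeping rather than substance. The sign convention for $\mu$ and the direction of the action (on coordinates versus on polynomials, $f^\sigma$ versus $\sigma\cdot f$) must be fixed consistently so that ``$\ge0$'' really corresponds to $I(\mathbf r)_{\ge0}$. Any mismatch is absorbed by replacing $\mathbf r$ with $-\mathbf r$ or $\sigma$ with $\sigma^{-1}$, both of which preserve the existential statements.
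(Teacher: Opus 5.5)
Your proposal is correct and is essentially the paper's argument. The paper simply cites the Hilbert--Mumford numerical criterion (Dolgachev, Theorem~9.1), and your identification $\mu(f,\lambda_{\mathbf r})=\min\{\mathbf r\cdot\mathbf i\mid \mathbf i\in\Supp(f)\}$, the conjugation of an arbitrary 1-PS into $T$, and the formal deduction of the ``in particular'' are exactly the standard unpacking of that citation.
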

\begin{proof}
See Theorem 9.1 of \cite{Dol03}.
\end{proof}

\section{Maximal strictly semistable supports and the covering theorem}\label{sec:covering-theorem}

In this section, we characterize the maximal elements of the family of non-stable supports with respect to the maximal torus $T$, and use them to construct the finite covering of the strictly semistable locus in Theorem~\ref{thm:cover}.
To this end, we consider the family of subsets of $I$ ordered by inclusion, and describe all maximal elements of
\[
\mathcal{S}=\{ I(\mathbf{r})_{\ge 0} \mid \mathbf{r} \in \mathbb{Z}_{(0)}^{n+1} \setminus \{0\} \}.
\]
Before giving the characterization and an explicit algorithm to obtain them in finitely many steps (as carried out for concrete cases in \cite{Shi1,Shi2}), we remark that $I(\mathbf{r})_{\ge 0}$ and $I(\mathbf{r'})_{\ge 0}$ might be the same for two different vectors $\mathbf{r},\mathbf{r'} \in \mathbb{Z}^{n+1}_{(0)} \setminus \{0\}$. 

\begin{lemma}\label{lem:maximal}
Let $I(\mathbf{r})_{\ge 0}$ be a maximal element of $\mathcal{S}$, where $\mathbf{r} \in \mathbb{Z}^{n+1}_{(0)} \setminus \{0\}$. Then there exist $n-1$ elements $\mathbf{x}_{1},\mathbf{x}_{2},\cdots,\mathbf{x}_{n-1} \in I$ and a vector $\mathbf{r'} \in \mathbb{Z}^{n+1}_{(0)} \setminus \{0\}$ such that they satisfy the following three conditions:
\begin{description}
\item[(1)] The vector subspace $U$ of $\mathbb{Q}^{n+1}$ spanned by $\mathbf{x}_{1},\cdots,\mathbf{x}_{n-1},\eta$ over $\mathbb{Q}$ has dimension $n$.

\item[(2)] The vector $\mathbf{r'}$ is orthogonal to the subspace $U$ of $\mathbb{Q}^{n+1}$.

\item[(3)] $I(\mathbf{r})_{\ge 0}=I(\mathbf{r'})_{\ge 0}$.
\end{description}
\end{lemma}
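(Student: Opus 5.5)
The plan is a standard perturbation argument in the space of weight vectors, carried out inside the hyperplane $H=\mathbb{Q}^{n+1}_{(0)}=\eta^{\perp}$. Since $\eta$ is a positive multiple of $(1,\ldots,1)$, a vector $\mathbf r$ has weight $0$ exactly when $\mathbf r\perp\eta$. Hence every $\mathbf r'\in H$ automatically satisfies $\mathbf r'\cdot\eta=0$. Condition (2) therefore amounts to asking that $\mathbf r'\in H$ be orthogonal to $n-1$ points $\mathbf x_j\in I$ which, together with $\eta$, span an $n$-dimensional space. Equivalently, the $\mathbf x_j$ should span a space whose orthogonal complement inside $H$ is the line $\Q\mathbf r'$.

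Start from the maximal $I(\mathbf r)_{\ge0}$ and let $Z=I(\mathbf r)_{=0}$. For each $\mathbf i\in I$, the sign of $\mathbf s\cdot\mathbf i$ is constant on the open cone
\[
C=\{\mathbf s\in H\mid \mathbf s\cdot\mathbf i>0\ (\mathbf i\in I(\mathbf r)_{>0}),\ \mathbf s\cdot\mathbf i<0\ (\mathbf i\in I\setminus I(\mathbf r)_{\ge0}),\ \mathbf s\cdot\mathbf i=0\ (\mathbf i\in Z)\}.
\]
This cone is the relative interior of a polyhedral cone cut out by finitely many rational hyperplanes $\mathbf i^\perp\cap H$, and it contains $\mathbf r$. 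Let $L=Z^\perp\cap H$, a rational linear subspace containing $\mathbf r$, and let $m=\dim L\ge1$. Every $\mathbf s\in C$ satisfies $I(\mathbf s)_{\ge0}=I(\mathbf r)_{\ge0}$.

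The key step is to show $m=1$ using maximality. Suppose $m\ge2$. Choose a rational $\mathbf u\in L$ not proportional to $\mathbf r$. Consider $\mathbf s_t=\mathbf r+t\mathbf u$ for $t\ge0$. For small $t$ it stays in $C$. Since $L$ contains a vector not proportional to $\mathbf r$, I would instead move along a direction $\mathbf u$ chosen so that some $\mathbf i\in Z$ has $\mathbf u\cdot\mathbf i\neq 0$. Such a direction exists as long as the forms $\mathbf i\in Z$ do not all vanish on $L$.

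This needs correcting, since $L$ is by definition the common zero set of $Z$. So I reverse the logic and work with $C$ as an open cone in $L$ of dimension $m$. Within $L$, the closure $\overline C$ is a pointed polyhedral cone. It is pointed because $\mathbf s$ and $-\mathbf s$ cannot both lie in $\overline C$ unless every $\mathbf s\cdot\mathbf i$ vanishes for all $\mathbf i\in I$, which forces $\mathbf s=0$. Therefore $\overline C$ has an extremal ray $\Q_{\ge0}\mathbf r'$ with $\mathbf r'$ rational, and we may take $\mathbf r'\in\Z^{n+1}_{(0)}\setminus\{0\}$.

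Moving from $\mathbf r$ to $\mathbf r'$ only turns some strict inequalities into equalities: signs $>0$ and $<0$ become $\ge0$ and $\le0$. Hence $I(\mathbf r)_{\ge0}\subseteq I(\mathbf r')_{\ge0}$, and maximality gives (3). Because the ray is extremal, the forms $\mathbf i\in I(\mathbf r')_{=0}$ cut $\overline C$, and hence $L$, down to the line $\Q\mathbf r'$. This means that $I(\mathbf r')_{=0}$ spans a subspace $V$ with $V^\perp\cap H=\Q\mathbf r'$.

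Now $\dim(V+\Q\eta)=n$, since its orthogonal complement in $\Q^{n+1}$ is $V^\perp\cap\eta^\perp=\Q\mathbf r'$. Moreover $\eta\notin V$ unless $V$ is already $n$-dimensional. In either case I select $n-1$ elements $\mathbf x_1,\ldots,\mathbf x_{n-1}\in I(\mathbf r')_{=0}$ such that $\Span(\mathbf x_1,\ldots,\mathbf x_{n-1},\eta)=V+\Q\eta$. This is possible by extracting a basis of $V+\Q\eta$ containing $\eta$ from the spanning set $I(\mathbf r')_{=0}\cup\{\eta\}$. Then $U$ has dimension $n$ and $\mathbf r'\perp U$, which gives (1) and (2).

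The main obstacle is the extremal-ray step: checking that the face $\overline C$ is pointed and that passing to its extremal ray both preserves (in fact enlarges) the nonnegative set and produces a zero set of full corank. I would handle it by viewing $\overline C$ as an intersection of finitely many rational half-spaces in $H$, so that its extremal rays are rational and are determined by their active constraints.
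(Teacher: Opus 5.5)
Your argument is correct, but it works in the space of weight vectors, whereas the paper works in the space of exponents.

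\textbf{The paper's route.} The paper forms $\check C=\mathrm{Conv}(I(\mathbf r)_{\ge0}\cup\{\eta\})\subset H_d$ and uses maximality a first time to show $\dim\check C=n$. It then takes a facet of $\check C$ through the boundary point $\eta$, reads off $\mathbf r'$ from its supporting hyperplane and the $\mathbf x_j$ from lattice points on that facet, and uses maximality a second time to get (3).

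\textbf{Your route.} You take the closed cell $\overline C\subset H$ of the arrangement $\{\mathbf i^\perp\cap H\}_{\mathbf i\in I}$ containing $\mathbf r$, and then an extremal ray of it.

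\textbf{How they compare.} The two constructions are dual. Facets of $\check C$ through $\eta$ correspond to extremal rays of the larger cone $\{\mathbf s\in H\mid \mathbf s\cdot\mathbf i\ge0\ (\mathbf i\in I(\mathbf r)_{\ge0})\}$. That cone is pointed exactly when $\dim\check C=n$, and this is where the paper needs maximality. Your cone also imposes $\mathbf s\cdot\mathbf i\le 0$ off the support, so it is pointed for free because $I$ spans $\Q^{n+1}$. Consequently you use maximality only once. Before that step you have actually shown something slightly stronger than the lemma: every member of $\mathcal S$ lies in some $I(\mathbf r')_{\ge0}$ with $\dim\bigl(\Span I(\mathbf r')_{=0}+\Q\eta\bigr)=n$. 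The paper's route, in turn, stays in the barycenter/convex-hull picture that Algorithm~\ref{alg:maximal} and Lemma~\ref{lem:barycenter} use.

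\textbf{Make the face fact explicit.} In a write-up, state the fact you rely on: the linear span of the face through $\mathbf r'$ equals the common zero set in $L$ of the constraints active at $\mathbf r'$. This is precisely what gives $V^\perp\cap H=\Q\mathbf r'$.

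\textbf{Delete the abandoned paragraph.} Remove the paragraph whose ``key step'' is $m=1$. It is not needed, and it is false in general. Take $n=2$, $d=4$, $\mathbf r=(-2,1,1)$. Then $\mathbf r\cdot\mathbf i=4-3i_0$, so $I(\mathbf r)_{=0}=\varnothing$ and $m=2$. Nevertheless $I(\mathbf r)_{\ge0}=\{\mathbf i\in I\mid i_0\le 1\}$ is maximal in $\mathcal S$. Here $\mathbf r$ itself cannot serve as $\mathbf r'$. One must pass to a vector such as $\mathbf r'=(-3,1,2)$, which vanishes on $(1,3,0)$ and has $I(\mathbf r')_{\ge0}=I(\mathbf r)_{\ge0}$.
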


\begin{proof}
Let us put $C=I(\mathbf{r})_{\ge 0} \cup \{\eta\}$. We consider its convex hull $\check{C}$ in the affine hyperplane
\[
H_d := \left\{\mathbf{x}\in\R^{n+1}\ \middle|\ \sum_{j=0}^{n}x_j=d\right\}.
\]
We first show that $\dim\check{C}=n$. Otherwise, the $\Q$-linear span of $C$ has dimension at most $n$, so there exists a nonzero integer vector $\mathbf{u}$ orthogonal to every element of $C$. Since $\mathbf{u}\cdot\eta=0$, we have $\mathbf{u}\in\Z^{n+1}_{(0)}$, and
\[
I(\mathbf{r})_{\ge0}\subseteq I(\mathbf{u})_{=0}\subseteq I(\mathbf{u})_{\ge0}.
\]
Choose $j$ such that $u_j>0$, which is possible because $\mathbf{u}\neq0$ and $\sum_j u_j=0$. If $\mathbf{e}_j$ is the $j$-th standard basis vector, then $d\mathbf{e}_j\in I(\mathbf{u})_{>0}$, whereas $d\mathbf{e}_j\notin I(\mathbf{r})_{\ge0}$. Thus $I(\mathbf{u})_{\ge0}$ strictly contains $I(\mathbf{r})_{\ge0}$, contradicting maximality.

Since $\check{C}\subseteq\{\mathbf{x}\mid\mathbf{r}\cdot\mathbf{x}\ge0\}$ and $\mathbf{r}\cdot\eta=0$, the point $\eta$ lies on the boundary of $\check{C}$ in $H_d$. Let $F$ be a facet of $\check{C}$ containing $\eta$. The facet is rational, and its supporting hyperplane in $H_d$ can be written as $\mathbf{r}'\cdot\mathbf{x}=0$ for some $\mathbf{r}'\in\Z^{n+1}_{(0)}\setminus\{0\}$, after clearing denominators. Choose its sign so that $\check{C}\subseteq\{\mathbf{x}\mid\mathbf{r}'\cdot\mathbf{x}\ge0\}$. Since $F$ has affine dimension $n-1$ and is spanned by points of $C\cap F$, we can choose $n-1$ points $\mathbf{x}_1,\ldots,\mathbf{x}_{n-1}\in I\cap F$ such that $\mathbf{x}_1,\ldots,\mathbf{x}_{n-1},\eta$ are affinely independent. They are also linearly independent, since all lie in $H_d$ and $d\neq0$. We have $I(\mathbf{r})_{\ge0}\subseteq I(\mathbf{r}')_{\ge0}$, so maximality gives $I(\mathbf{r})_{\ge0}=I(\mathbf{r}')_{\ge0}$.
\end{proof}

By this lemma, we can determine the set of maximal elements of $\mathcal{S}$ up to permutations of coordinates in finitely many steps using the following algorithm. 

\begin{algorithm}\label{alg:maximal}
Let $\mathcal{F}$ be the set of all ordered $(n-1)$-tuples
$\mathbf{x}=(\mathbf{x}_{1},\ldots,\mathbf{x}_{n-1})$ of \emph{pairwise distinct} points of $I$.
For $n=1$, $\mathcal F$ consists of the empty tuple.
Fix a total order on $\mathcal{F}$.
As initial data, set $\mathcal{S}'=\varnothing$ and let
$\mathbf{x}=(\mathbf{x}_{1},\ldots,\mathbf{x}_{n-1})$ be the minimum element of $\mathcal{F}$.
We modify $\mathcal{S}'$ by the following algorithm.
\begin{itemize}

\item Step 1.
Let $U\subset \mathbb{Q}^{n+1}$ be the $\mathbb{Q}$-subspace spanned by
$\mathbf{x}_{1},\ldots,\mathbf{x}_{n-1},\eta$.
If $\dim_{\mathbb{Q}}(U)=n$, then take a \emph{reduced} normal vector
$\mathbf{r}=(r_{0},\ldots,r_{n})\in \mathbb{Z}^{n+1}_{(0)}\setminus\{0\}$ of $U$
(i.e.\ $\mathbf{r}\cdot \mathbf{u}=0$ for all $\mathbf{u}\in U$)
and go to Step~2; otherwise, go to Step~5.

\item Step 2.
If $\mathbf{r}$ satisfies $r_{0} \ge \cdots \ge r_{n}$ or $r_{0} \le \cdots \le r_{n}$,
then go to Step~3; otherwise, go to Step~5.

\item Step 3.
If $r_{0} \ge \cdots \ge r_{n}$ (resp.\ $r_{0} \le \cdots \le r_{n}$),
add $I(\mathbf{r})_{\ge 0}$ (resp.\ $I(-\mathbf{r})_{\ge 0}$) to $\mathcal{S}'$
and go to Step~4.

\item Step 4.
Delete all elements of $\mathcal{S}'$ that are not maximal in $\mathcal{S}'$
(with respect to inclusion), and go to Step~5.

\item Step 5.
Replace the element $\mathbf{x}$ with the next element if $\mathbf{x}$ is not the maximum element,
and go to Step~1. Otherwise, stop the algorithm.

\end{itemize}
\end{algorithm}

For completeness, we justify the restriction to ordered weights and the maximality test. Lemma~\ref{lem:maximal} shows that every maximal support, after a permutation of coordinates, is produced by Steps~1--3. Moreover, if $r_0\ge\cdots\ge r_n$ and $I(\mathbf{r})_{\ge0}\subseteq I(\mathbf{s})_{\ge0}$, sorting the coordinates of $\mathbf{s}$ in decreasing order preserves this inclusion. Indeed, suppose $i<j$ and $s_i<s_j$, and let $\mathbf{s}'$ be obtained by exchanging $s_i$ and $s_j$. For $\mathbf{a}\in I(\mathbf{r})_{\ge0}$ with $a_i\ge a_j$, we have $\mathbf{s}'\cdot\mathbf{a}\ge\mathbf{s}\cdot\mathbf{a}\ge0$. If $a_i<a_j$, exchanging these two coordinates of $\mathbf{a}$ gives $\mathbf{a}'\in I(\mathbf{r})_{\ge0}$, and $\mathbf{s}'\cdot\mathbf{a}=\mathbf{s}\cdot\mathbf{a}'\ge0$. Repeating such exchanges proves the assertion. Strict inclusion is preserved as well, since coordinate permutations preserve the cardinality of a support. Thus every support produced by Steps~1--3 is contained in a maximal support produced by those steps, and Step~4 retains exactly the maximal supports, up to permutations of coordinates.

Every member of $\mathcal{S}$ is \emph{non-stable} with respect to $T$, but a maximal element $I(\mathbf{r})_{\ge 0} \in \mathcal{S}$ might still be \emph{unstable} with respect to $T$ (i.e., $I(\mathbf{r})_{\ge 0} \subseteq I(\mathbf{r}'')_{>0}$ for some other $\mathbf{r}'' \in \mathbb{Z}^{n+1}_{(0)} \setminus \{0\}$). The following lemma characterizes when $I(\mathbf{r})_{\ge 0}$ is strictly semistable with respect to $T$.

\begin{lemma}\label{lem:barycenter}
For a non-trivial 1-PS $\mathbf{r} \in \mathbb{Z}^{n+1}_{(0)} \setminus \{0\}$, the following three conditions are equivalent:
\begin{description}
\item[(1)] $I(\mathbf{r})_{\ge 0}$ is strictly semistable with respect to $T$ (that is, there is no $\mathbf{r}'' \in \mathbb{Z}^{n+1}_{(0)} \setminus \{0\}$ such that $I(\mathbf{r})_{\ge 0} \subseteq I(\mathbf{r}'')_{>0}$).
\item[(2)] $\eta \in \mathrm{Conv}(I(\mathbf{r})_{\ge 0})$.
\item[(3)] $\eta \in \mathrm{Conv}(I(\mathbf{r})_{=0})$.
\end{description}
\end{lemma}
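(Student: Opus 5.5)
The plan is to prove the cycle of implications (3)$\Rightarrow$(2)$\Rightarrow$(1)$\Rightarrow$(3). The basic tool is the elementary convex-geometric form of the Hilbert--Mumford criterion. A finite set $s\subseteq I$ is unstable with respect to $T$ if and only if $\eta\notin\mathrm{Conv}(s)$. Indeed, if $\eta\notin\mathrm{Conv}(s)$, separate $\eta$ from $\mathrm{Conv}(s)$ inside the affine hyperplane $H_d$ by a rational hyperplane. This gives a vector $\mathbf{r}''$ with $\mathbf{r}''\cdot\mathbf{x}>\mathbf{r}''\cdot\eta$ for all $\mathbf{x}\in s$. Replacing $\mathbf{r}''$ by $\mathbf{r}''-\frac{\mathrm{wt}(\mathbf{r}'')}{n+1}(1,\ldots,1)$ and clearing denominators, we may assume $\mathbf{r}''\in\Z^{n+1}_{(0)}\setminus\{0\}$. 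Then $\mathbf{r}''\cdot\eta=0$, because $\eta$ is a multiple of $(1,\ldots,1)$, and hence $s\subseteq I(\mathbf{r}'')_{>0}$. Conversely, if $s\subseteq I(\mathbf{r}'')_{>0}$, then the linear functional $\mathbf{r}''\cdot$ is positive on $\mathrm{Conv}(s)$ but vanishes at $\eta$, so $\eta\notin\mathrm{Conv}(s)$.

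The implication (3)$\Rightarrow$(2) is immediate from $I(\mathbf{r})_{=0}\subseteq I(\mathbf{r})_{\ge0}$. The implication (2)$\Rightarrow$(1) is the converse direction of the criterion applied to $s=I(\mathbf{r})_{\ge0}$. Note also that $I(\mathbf{r})_{\ge0}$ is automatically non-stable, so (1) amounts to "not unstable". For (1)$\Rightarrow$(2), apply the separation direction of the criterion to $s=I(\mathbf{r})_{\ge0}$: if (2) failed, we would obtain $\mathbf{r}''$ with $I(\mathbf{r})_{\ge0}\subseteq I(\mathbf{r}'')_{>0}$, contradicting (1).

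The remaining and main step is (2)$\Rightarrow$(3). Write $\eta=\sum_{\mathbf{x}\in I(\mathbf{r})_{\ge0}}c_{\mathbf{x}}\mathbf{x}$ with $c_{\mathbf{x}}\ge0$ and $\sum c_{\mathbf{x}}=1$. Dotting with $\mathbf{r}$ gives
\[
0=\mathbf{r}\cdot\eta=\sum c_{\mathbf{x}}\,(\mathbf{r}\cdot\mathbf{x}).
\]
Every term on the right is nonnegative, so $c_{\mathbf{x}}=0$ whenever $\mathbf{r}\cdot\mathbf{x}>0$. Hence the same convex combination expresses $\eta$ as a point of $\mathrm{Conv}(I(\mathbf{r})_{=0})$, which is (3).

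The only point requiring care is the rationality and integrality of the separating vector in the first paragraph. Since $s$ is a finite set of lattice points and $\eta$ is rational, a strictly separating hyperplane can be chosen with rational coefficients: take, for example, the direction from $\eta$ to the nearest point of $\mathrm{Conv}(s)$, which is rational, and perturb it slightly if needed, since the strict inequalities form an open condition. After clearing denominators this gives an integral $\mathbf{r}''$, and projecting to weight zero as above completes the argument.
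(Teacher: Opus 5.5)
Your proposal is correct and follows essentially the same route as the paper. It proves (1)$\Leftrightarrow$(2) by rational strict separation of $\eta$ followed by projection to weight zero, (3)$\Rightarrow$(2) by inclusion, and (2)$\Rightarrow$(3) by pairing the convex combination with $\mathbf{r}$, which forces the coefficients on $I(\mathbf{r})_{>0}$ to vanish; your nearest-point argument just makes the paper's ``rational approximation'' step explicit.
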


\begin{proof}
To prove $\mathbf{(1)} \Leftrightarrow \mathbf{(2)}$, suppose first that $\eta\notin\mathrm{Conv}(I(\mathbf{r})_{\ge0})$. By strict separation and rational approximation, there exists $\mathbf{a}\in\Q^{n+1}$ such that $\mathbf{a}\cdot(\mathbf{i}-\eta)>0$ for every $\mathbf{i}\in I(\mathbf{r})_{\ge0}$. Subtracting $\frac{\sum_j a_j}{n+1}(1,\ldots,1)$ from $\mathbf{a}$ and clearing denominators gives a nonzero $\mathbf{r}''\in\Z^{n+1}_{(0)}$ such that $\mathbf{r}''\cdot\mathbf{i}>0$ for every such $\mathbf{i}$. Conversely, the existence of such a $\mathbf{r}''$ excludes $\eta$ from the convex hull, since $\mathbf{r}''\cdot\eta=0$.
The implication $\mathbf{(3)} \Rightarrow \mathbf{(2)}$ is obvious from $I(\mathbf{r})_{=0} \subseteq I(\mathbf{r})_{\ge 0}$.
To see $\mathbf{(2)} \Rightarrow \mathbf{(3)}$, suppose that $\eta = \sum_{\mathbf{i} \in I(\mathbf{r})_{\ge 0}} c_{\mathbf{i}} \mathbf{i}$ with $c_{\mathbf{i}} \ge 0$ and $\sum c_{\mathbf{i}} = 1$. Taking the inner product with $\mathbf{r}$, we have
\[
0 = \mathbf{r} \cdot \eta = \sum_{\mathbf{i} \in I(\mathbf{r})_{\ge 0}} c_{\mathbf{i}} (\mathbf{r} \cdot \mathbf{i}).
\]
Since $\mathbf{r} \cdot \mathbf{i} \ge 0$ for all $\mathbf{i} \in I(\mathbf{r})_{\ge 0}$ and $\mathbf{r} \cdot \mathbf{i} > 0$ for all $\mathbf{i} \in I(\mathbf{r})_{>0}$, we must have $c_{\mathbf{i}} = 0$ for all $\mathbf{i} \in I(\mathbf{r})_{>0}$. Thus $\eta \in \mathrm{Conv}(I(\mathbf{r})_{=0})$.
\end{proof}

By Lemma~\ref{lem:barycenter}, discarding from $\mathcal{S}'$ those elements $I(\mathbf{r})_{\ge 0}$ with
\[
\eta\notin\mathrm{Conv}(I(\mathbf{r})_{=0}),
\]
we obtain the complete list of maximal strictly semistable supports
\[
I(\mathbf{r}_{1})_{\ge 0},\ldots,I(\mathbf{r}_{M})_{\ge 0}
\]
with respect to $T$, up to permutations of coordinates.

For each $k=1,2,\dots,M$, we define the linear subspaces
\[
V_{k}:=\mathrm{Span}_{\C} \{ x^{\mathbf{i}} \mid \mathbf{i} \in I(\mathbf{r}_{k})_{\ge 0} \} \subset W, \qquad
W_{k}:=\mathrm{Span}_{\C} \{ x^{\mathbf{i}} \mid \mathbf{i} \in I(\mathbf{r}_{k})_{= 0} \} \subset V_{k},
\]
and set $X_{k} := \overline{\SL(n+1) \cdot \PP(V_{k})} \subset \PP(W)$. From the above arguments, we obtain the following covering theorem. 

\begin{thm}\label{thm:cover}
The strictly semistable locus $\Sigma^{ss} = \PP(W)^{ss} \setminus \PP(W)^{s}$ is covered by
\[
\Sigma^{ss}
= \bigcup_{k=1}^{M} \bigl(\SL(n+1) \cdot \PP(V_{k}) \cap \PP(W)^{ss}\bigr)
= \bigcup_{k=1}^{M} \bigl(X_{k} \cap \PP(W)^{ss}\bigr).
\]
\end{thm}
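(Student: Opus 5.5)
We prove the two equalities by double inclusion. The second equality is the easier one, so we start there. Each $X_k\cap\PP(W)^{ss}$ contains $\SL(n+1)\cdot\PP(V_k)\cap\PP(W)^{ss}$, which gives one inclusion. For the other, recall that the non-stable locus $\PP(W)\setminus\PP(W)^s$ is closed and $\SL(n+1)$-invariant. Every $f\in V_k$ has $\Supp(f)\subseteq I(\mathbf r_k)_{\ge0}$, so it is not stable with respect to $T$, and by Theorem~\ref{numerical} it is not stable. Hence $\PP(V_k)$, its $\SL(n+1)$-orbit, and the closure $X_k$ all lie in the non-stable locus. Intersecting with $\PP(W)^{ss}$ gives $X_k\cap\PP(W)^{ss}\subseteq\Sigma^{ss}$. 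The same argument shows $\SL(n+1)\cdot\PP(V_k)\cap\PP(W)^{ss}\subseteq\Sigma^{ss}$. The only point to note is that the supports were listed up to permutation of coordinates. A permutation matrix is in $\SL(n+1)$ up to a sign, which acts trivially on $\PP(W)$, so all permuted copies of $\PP(V_k)$ already lie in the orbit $\SL(n+1)\cdot\PP(V_k)$.

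The substantive inclusion is $\Sigma^{ss}\subseteq\bigcup_k \SL(n+1)\cdot\PP(V_k)$. Let $[f]\in\Sigma^{ss}$.

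\textbf{Step 1.} By Theorem~\ref{numerical}(1), there is $\sigma\in\SL(n+1)$ such that $f^\sigma$ is not stable with respect to $T$. So $\Supp(f^\sigma)\subseteq I(\mathbf r)_{\ge0}$ for some nonzero $\mathbf r\in\Z^{n+1}_{(0)}$.

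\textbf{Step 2.} Since $I$ is finite, $\mathcal S$ is a finite poset. Hence $I(\mathbf r)_{\ge0}$ is contained in some maximal element $I(\mathbf s)_{\ge0}$ of $\mathcal S$. By Algorithm~\ref{alg:maximal} and the justification following it, $I(\mathbf s)_{\ge0}$ is a coordinate permutation $\rho$ of some member of $\mathcal S'$.

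\textbf{Step 3.} We show that this maximal support is strictly semistable with respect to $T$. This step requires care: maximality alone does not give it, and we need Theorem~\ref{numerical}(2). Since $f^\sigma$ is semistable with respect to $T$, $\Supp(f^\sigma)$ is not contained in any $I(\mathbf r'')_{>0}$. By the convex separation argument of Lemma~\ref{lem:barycenter}, this gives $\eta\in\mathrm{Conv}(\Supp(f^\sigma))$; the same separation proof applies verbatim to an arbitrary subset of $I$. Therefore $\eta\in\mathrm{Conv}(I(\mathbf s)_{\ge0})$. By Lemma~\ref{lem:barycenter}, $\eta\in\mathrm{Conv}(I(\mathbf s)_{=0})$, so this support survived the discarding step and equals $\rho(I(\mathbf r_k)_{\ge0})$ for some $k$.

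\textbf{Step 4.} Translating back, $f^{\sigma}$ lies in the permuted copy of $V_k$, i.e.\ in $\rho\cdot V_k$. Hence $[f]\in\SL(n+1)\cdot\PP(V_k)$, with sign adjustments as noted above. Since $[f]$ is semistable, it lies in $\SL(n+1)\cdot\PP(V_k)\cap\PP(W)^{ss}$.

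Combining the two inclusions with the chain $\SL(n+1)\cdot\PP(V_k)\cap\PP(W)^{ss}\subseteq X_k\cap\PP(W)^{ss}\subseteq\Sigma^{ss}$ yields both equalities. The main obstacle is Step~3: one must check that the maximal element containing the support of a semistable polynomial is itself strictly semistable, rather than an unstable maximal support. This is handled by passing the barycenter condition from $\Supp(f^\sigma)$ up to the larger set $I(\mathbf s)_{\ge0}$.
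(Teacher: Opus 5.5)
Your proposal is correct and follows essentially the same route as the paper's proof. You use Theorem~\ref{numerical} to get a $T$-non-stable support, enlarge it to a maximal element coming from Algorithm~\ref{alg:maximal}, use $T$-semistability of $f^\sigma$ with Lemma~\ref{lem:barycenter} to identify that maximal support with some $I(\mathbf r_k)_{\ge0}$, and get the reverse inclusion from closedness and invariance of the non-stable locus. Two small points: your Step~3 detour through $\eta\in\mathrm{Conv}(\Supp(f^\sigma))$ is valid but can be shortened (if $I(\mathbf s)_{\ge0}\subseteq I(\mathbf r'')_{>0}$ then $\Supp(f^\sigma)\subseteq I(\mathbf r'')_{>0}$, contradicting Theorem~\ref{numerical}(2)); and in the permutation remark you should rescale by $\zeta$ with $\zeta^{n+1}=-1$ (or use a signed permutation matrix) rather than by $-1$, since $\det(-P)=(-1)^{n+1}\det P$.
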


\begin{proof}
Let $[f] \in \Sigma^{ss}$. By Theorem~\ref{numerical}, after acting by an element of $\SL(n+1)$ and permuting the coordinates, there exists a reduced 1-PS $\mathbf{r} \in \Z^{n+1}_{(0)} \setminus \{0\}$ with $r_{0} \ge \dots \ge r_{n}$ such that $\Supp(f) \subseteq I(\mathbf{r})_{\ge 0}$. By Lemma~\ref{lem:maximal} and Algorithm~\ref{alg:maximal}, $I(\mathbf{r})_{\ge 0}$ is contained in a maximal element of $\mathcal{S}'$. Since $f$ is semistable, this maximal element must be semistable with respect to $T$, so it is one of $I(\mathbf{r}_{1})_{\ge 0}, \dots, I(\mathbf{r}_{M})_{\ge 0}$ by Lemma~\ref{lem:barycenter}. Thus $[f] \in \SL(n+1) \cdot \PP(V_{k})$ for some $k$. Conversely, any point in $\SL(n+1) \cdot \PP(V_{k}) \cap \PP(W)^{ss}$ is not stable by Theorem~\ref{numerical}, and since $\PP(W)^{s}$ is open, $X_{k} \cap \PP(W)^{ss} \subseteq \Sigma^{ss}$ also holds.
\end{proof}

\section{Reduction of the quotient image\\ to the weight-zero subspace}\label{sec:zero-weight}

Let $G := \SL(n+1)$ and take a nontrivial 1-PS
\[
  0 \neq \mathbf{r}=(r_0,\ldots,r_n) \in \mathbb{Z}^{n+1}_{(0)}.
\]
Write the corresponding weight decomposition of $W = \mathbb{C}[x_0,\ldots,x_n]_d$ as
\[
  W = \bigoplus_{m\in\mathbb{Z}} W_m(\mathbf{r}),
  \qquad
  W_m(\mathbf{r})
  := \Span_{\mathbb{C}}\{x^{\mathbf{i}} \mid \mathbf{i} \in I,\ \mathbf{r}\cdot \mathbf{i}=m\}.
\]
Set
\[
  V(\mathbf{r}) := \bigoplus_{m\geq 0} W_m(\mathbf{r})
  = \Span_{\mathbb{C}}\{x^{\mathbf{i}} \mid \mathbf{i} \in I(\mathbf{r})_{\ge 0}\}
\]
and
\[
  W(\mathbf{r}) := W_0(\mathbf{r})
  = \Span_{\mathbb{C}}\{x^{\mathbf{i}} \mid \mathbf{i} \in I(\mathbf{r})_{= 0}\}.
\]
If $H_{\mathbf{r}} := \lambda_{\mathbf{r}}(\mathbb{G}_m)$ denotes the image of the 1-PS $\lambda_{\mathbf{r}}$ associated with $\mathbf{r}$, then $W(\mathbf{r}) = W^{H_{\mathbf{r}}}$. Finally, define
\[
  X(\mathbf{r}) := \overline{G\cdot\PP(V(\mathbf{r}))} \subset \PP(W)
\]
and
\[
  \Phi(\mathbf{r}) := \pi\bigl(X(\mathbf{r})\cap \PP(W)^{ss}\bigr)
  \subset \partial\mathcal{M}^{\mathrm{GIT}}.
\]
Every point of $\PP(V(\mathbf{r}))$ is non-stable with respect to $T$. Hence $X(\mathbf{r})$ is contained in the non-stable locus, and
\[
  X(\mathbf{r})\cap \PP(W)^{ss} = X(\mathbf{r})\cap \Sigma^{ss}.
\]

\begin{lemma}[Zero-weight reduction in the good quotient]
\label{lem:zero-weight-reduction-in-quotient}
For every nonzero $\mathbf{r} \in \mathbb{Z}^{n+1}_{(0)}$, one has
\begin{equation}
\label{eq:zero-weight-quotient-image}
  \Phi(\mathbf{r})
  =
  \overline{
    \pi\bigl(\PP(V(\mathbf{r}))\cap \PP(W)^{ss}\bigr)
  }
  =
  \overline{
    \pi\bigl(\PP(W(\mathbf{r}))\cap \PP(W)^{ss}\bigr)
  },
\end{equation}
where the closures are taken in $\mathcal{M}^{\mathrm{GIT}}$.
\end{lemma}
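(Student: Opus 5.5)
We prove the two equalities in \eqref{eq:zero-weight-quotient-image} by establishing a cycle of inclusions, writing $A:=\pi(\PP(V(\mathbf r))\cap\PP(W)^{ss})$ and $A_0:=\pi(\PP(W(\mathbf r))\cap\PP(W)^{ss})$. Three ingredients are needed.

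\textbf{The degeneration map.} For $[f]\in\PP(V(\mathbf r))$ write $f=\sum_{m\ge0}f_m$ with $f_m\in W_m(\mathbf r)$. Then
\[
\lambda_{\mathbf r}(t)\cdot f=\sum_{m\ge0}t^m f_m\longrightarrow f_0\qquad (t\to0).
\]
Suppose $[f]$ is semistable. Then $f_0\neq0$: otherwise $\Supp(f)\subseteq I(\mathbf r)_{>0}$ and $f$ would be unstable by Theorem~\ref{numerical}. Since $\lambda_{\mathbf r}(t)\cdot[f]$ lies in the orbit $G\cdot[f]$, its limit $[f_0]$ lies in $\overline{G\cdot[f]}$. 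A semistable point and the points of its orbit closure that lie in $\PP(W)^{ss}$ have the same image under the good quotient. It therefore remains to see that $[f_0]$ is semistable. This holds because the semistable locus is saturated: a $G$-invariant section nonvanishing at $[f]$ is constant along the orbit, hence nonvanishing at the limit $[f_0]$. Consequently $\pi([f])=\pi([f_0])\in A_0$, which gives $A\subseteq A_0$. The reverse inclusion $A_0\subseteq A$ is trivial, since $W(\mathbf r)\subseteq V(\mathbf r)$. Hence $A=A_0$, and in particular $\overline A=\overline{A_0}$.

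\textbf{The inclusion $\Phi(\mathbf r)\supseteq\overline A$.} The set $X(\mathbf r)\cap\PP(W)^{ss}$ is closed in $\PP(W)^{ss}$ and $G$-invariant. A good quotient maps closed invariant sets to closed sets, so $\Phi(\mathbf r)$ is closed in $\mathcal M^{\mathrm{GIT}}$. It contains $A$, and hence contains $\overline A$.

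\textbf{The inclusion $\Phi(\mathbf r)\subseteq\overline A$.} Since $\pi$ is continuous and $G$-invariant, we have
\[
\pi\bigl(G\cdot\PP(V(\mathbf r))\cap\PP(W)^{ss}\bigr)=A .
\]
The set $G\cdot\PP(V(\mathbf r))\cap\PP(W)^{ss}$ is dense in $X(\mathbf r)\cap\PP(W)^{ss}$. Indeed, $X(\mathbf r)$ is irreducible, being the closure of the image of the irreducible variety $G\times\PP(V(\mathbf r))$, and $\PP(W)^{ss}\cap X(\mathbf r)$ is open in $X(\mathbf r)$. We may assume this intersection is nonempty, since otherwise all three sets in \eqref{eq:zero-weight-quotient-image} are empty. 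Continuity of $\pi$ then gives
\[
\pi\bigl(X(\mathbf r)\cap\PP(W)^{ss}\bigr)\subseteq\overline A .
\]

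\textbf{Main obstacle.} The only delicate point is the justification in the first step that $\pi([f])=\pi([f_0])$. This is exactly the standard fact that a good quotient identifies points whose orbit closures meet inside the semistable locus, and it can be cited from Mumford's GIT.
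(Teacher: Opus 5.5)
Your proof is correct and follows the paper's argument essentially step for step: the image of the closed $G$-invariant set $X(\mathbf r)\cap\PP(W)^{ss}$ is closed, density plus continuity of $\pi$ give the reverse inclusion, and the degeneration $\lambda_{\mathbf r}(t)\cdot f\to f_0$ together with an invariant $F$ satisfying $F(f_0)=F(f)\neq0$ handles the zero-weight reduction. The only cosmetic difference is how you get $\pi([f])=\pi([f_0])$: you use $[f_0]\in\overline{G\cdot[f]}$ and the closedness of the fibre of $\pi$, which is plain continuity rather than the orbit-closure-intersection theorem you cite, whereas the paper extends $t\mapsto\lambda_{\mathbf r}(t)\cdot[f]$ to $\A^1$ and invokes separatedness.
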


\begin{proof}
Set
\[
  A(\mathbf{r}) := \PP(V(\mathbf{r}))\cap \PP(W)^{ss}.
\]
We first prove
\begin{equation}
\label{eq:prequotient-closure-to-quotient-closure}
  \Phi(\mathbf{r}) = \overline{\pi(A(\mathbf{r}))}.
\end{equation}
If $A(\mathbf{r})=\varnothing$, then $\PP(V(\mathbf{r}))$ is contained in the
unstable locus, which is closed and $G$-invariant. Therefore
$X(\mathbf{r})\cap \PP(W)^{ss}=\varnothing$, and
\eqref{eq:prequotient-closure-to-quotient-closure} is immediate.

Assume now that $A(\mathbf{r})\neq\varnothing$. Since $\PP(W)^{ss}$ is open in
$\PP(W)$, the subset $A(\mathbf{r})$ is a nonempty open, hence dense, subset of the
projective space $\PP(V(\mathbf{r}))$. Consequently,
\[
  X(\mathbf{r})\cap \PP(W)^{ss}
  =
  \overline{G\cdot A(\mathbf{r})}^{\,\PP(W)^{ss}}.
\]
Indeed, $G\times A(\mathbf{r})$ is dense in
$G\times\PP(V(\mathbf{r}))$, so the action morphism shows that
$G\cdot\PP(V(\mathbf{r}))$ is contained in the closure of
$G\cdot A(\mathbf{r})$.

The subset $X(\mathbf{r})\cap \PP(W)^{ss}$ is closed and $G$-invariant in
$\PP(W)^{ss}$. Hence its image under the good quotient is closed \cite{MFK94}. Since
$A(\mathbf{r})\subseteq X(\mathbf{r})\cap\PP(W)^{ss}$, we obtain
\[
  \overline{\pi(A(\mathbf{r}))}
  \subseteq
  \pi\bigl(X(\mathbf{r})\cap \PP(W)^{ss}\bigr).
\]
Conversely, continuity of $\pi$, together with its $G$-invariance,
gives
\[
\begin{aligned}
  \pi\bigl(X(\mathbf{r})\cap \PP(W)^{ss}\bigr)
  &\subseteq
  \overline{\pi(G\cdot A(\mathbf{r}))} \\
  &=
  \overline{\pi(A(\mathbf{r}))}.
\end{aligned}
\]
This proves \eqref{eq:prequotient-closure-to-quotient-closure}.

It remains to replace $V(\mathbf{r})$ by its weight-zero subspace $W(\mathbf{r})$. Let
$[f]\in A(\mathbf{r})$, and write
\[
  f = f_0 + f_{>0},
  \qquad
  f_0 \in W(\mathbf{r}),
  \qquad
  f_{>0} \in \bigoplus_{m>0} W_m(\mathbf{r}).
\]
With the convention fixed in Section~\ref{num}, we have
\[
  \lim_{t\to 0}\lambda_{\mathbf{r}}(t)\cdot f = f_0
\]
in the affine space $W$. Since $[f]$ is semistable, there exists a
homogeneous invariant
\[
  F \in \mathbb{C}[W]^G
\]
of positive degree such that $F(f)\neq 0$. The invariance of $F$ gives
\[
  F(f_0)
  =
  \lim_{t\to 0} F\bigl(\lambda_{\mathbf{r}}(t)\cdot f\bigr)
  = F(f)
  \neq 0.
\]
Thus $f_0\neq 0$ and $[f_0]\in \PP(W)^{ss}$.

The morphism
\[
  \mathbb{G}_m \longrightarrow \PP(W)^{ss},
  \qquad
  t \longmapsto \lambda_{\mathbf{r}}(t)\cdot[f],
\]
extends to $\mathbb{A}^1$ with value $[f_0]$ at $0$. Its composition
with $\pi$ is constant on $\mathbb{G}_m$. Since
$\mathcal{M}^{\mathrm{GIT}}$ is separated, the composition is constant on
$\mathbb{A}^1$, and therefore
\[
  \pi([f]) = \pi([f_0]).
\]
It follows that
\[
  \pi\bigl(\PP(V(\mathbf{r}))\cap \PP(W)^{ss}\bigr)
  \subseteq
  \pi\bigl(\PP(W(\mathbf{r}))\cap \PP(W)^{ss}\bigr).
\]
The reverse inclusion follows from $W(\mathbf{r})\subseteq V(\mathbf{r})$. Hence the two
images are equal. Combining this equality with
\eqref{eq:prequotient-closure-to-quotient-closure} proves
\eqref{eq:zero-weight-quotient-image}.
\end{proof}

For the maximal strictly semistable supports $I(\mathbf{r}_1)_{\ge 0}, \ldots, I(\mathbf{r}_M)_{\ge 0}$ obtained in Section~\ref{sec:covering-theorem}, we have
\[
  V(\mathbf{r}_k)=V_k,
  \qquad
  W(\mathbf{r}_k)=W_k,
  \qquad
  X(\mathbf{r}_k)=X_k,
\]
and we set $\Phi_k := \Phi(\mathbf{r}_k) = \pi(X_k \cap \PP(W)^{ss})$. Combining Theorem~\ref{thm:cover} and Lemma~\ref{lem:zero-weight-reduction-in-quotient}, we obtain the weight-zero version of the covering theorem for the GIT boundary.

\begin{thm}\label{thm:cover-zero-weight}
The GIT boundary $\partial\mathcal{M}^{\mathrm{GIT}} = \pi(\Sigma^{ss})$ is covered by
\[
  \partial\mathcal{M}^{\mathrm{GIT}}
  = \bigcup_{k=1}^{M} \Phi_k
  = \bigcup_{k=1}^{M} \overline{\pi\bigl(\PP(W_k)\cap \PP(W)^{ss}\bigr)}.
\]
In particular, every irreducible component of $\partial\mathcal{M}^{\mathrm{GIT}}$ is of the form $\Phi_k = \overline{\pi(\PP(W_k)\cap \PP(W)^{ss})}$ for some $k \in \{1, \ldots, M\}$.
\end{thm}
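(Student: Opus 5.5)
The plan is to combine the covering of Theorem~\ref{thm:cover} with the zero-weight reduction of Lemma~\ref{lem:zero-weight-reduction-in-quotient}, and then use that there are only finitely many pieces.

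\emph{Step 1: the first equality.} Since $\pi$ is surjective onto $\partial\mathcal M^{\mathrm{GIT}}=\pi(\Sigma^{ss})$, apply $\pi$ to the covering
\[
\Sigma^{ss}=\bigcup_{k=1}^{M}\bigl(X_k\cap\PP(W)^{ss}\bigr)
\]
of Theorem~\ref{thm:cover}. This gives
\[
\partial\mathcal M^{\mathrm{GIT}}=\bigcup_{k=1}^M\pi\bigl(X_k\cap\PP(W)^{ss}\bigr)=\bigcup_{k=1}^M\Phi_k .
\]
Here we use the identification $X(\mathbf r_k)=X_k$ and the fact, recorded before Lemma~\ref{lem:zero-weight-reduction-in-quotient}, that $X(\mathbf r)\cap\PP(W)^{ss}\subseteq\Sigma^{ss}$.

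\emph{Step 2: the second equality.} Apply Lemma~\ref{lem:zero-weight-reduction-in-quotient} to each $\mathbf r_k$. This yields
\[
\Phi_k=\overline{\pi\bigl(\PP(W_k)\cap\PP(W)^{ss}\bigr)},
\]
with the closure taken in $\mathcal M^{\mathrm{GIT}}$. Each $\Phi_k$ is the image of a closed $G$-invariant subset under a good quotient, so it is closed. Hence all $\Phi_k$ lie in $\partial\mathcal M^{\mathrm{GIT}}$, and the closures cause no issue.

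\emph{Step 3: irreducible components.} First show that each $\Phi_k$ is irreducible. The set $\PP(W_k)\cap\PP(W)^{ss}$ is an open subset of a projective space. It is nonempty, because by Lemma~\ref{lem:barycenter} the support $I(\mathbf r_k)_{=0}$ contains $\eta$ in its convex hull, so a general element of $W_k$ is semistable with respect to $T$. One may instead argue that $A(\mathbf r_k)\ne\varnothing$ by construction of the list of semistable maximal supports together with the lemma. Being a nonempty open subset of an irreducible space, it is irreducible. Its image under the continuous map $\pi$ is therefore irreducible, and so is the closure $\Phi_k$.

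Now let $B$ be an irreducible component of $\partial\mathcal M^{\mathrm{GIT}}$. Then $B\subseteq\bigcup_k\Phi_k$ is a finite union of closed sets, so irreducibility gives $B\subseteq\Phi_k$ for some $k$. Since $\Phi_k$ is irreducible and $B$ is maximal among irreducible closed subsets, we conclude $B=\Phi_k$.

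\emph{Main obstacle.} The delicate point is nonemptiness of $\PP(W_k)\cap\PP(W)^{ss}$, i.e.\ that $T$-semistability of the support really gives $G$-semistability of a general member. This is needed so that each $\Phi_k$ is nonempty and irreducible. If it failed for some $k$, that $\Phi_k$ would be empty and simply drop out of the union; the component statement is unaffected, since it only asserts that each component equals some $\Phi_k$.
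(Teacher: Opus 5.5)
Your proposal is correct and follows essentially the same route as the paper: push the covering of Theorem~\ref{thm:cover} forward by $\pi$, use Lemma~\ref{lem:zero-weight-reduction-in-quotient} for the closedness of each $\Phi_k$ and its zero-weight closure description, and conclude by irreducibility of the finitely many closed pieces. One caution: your first justification that $\PP(W_k)\cap\PP(W)^{ss}$ is nonempty does not work, because $T$-semistability of the support in fixed coordinates does not give $G$-semistability (Theorem~\ref{numerical} requires this for every $f^{\sigma}$); as you note, nonemptiness is not needed, which is exactly the paper's ``whenever it is nonempty'' caveat (it is in fact recovered later in Theorem~\ref{thm:main} via Park's criterion).
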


\begin{proof}
Applying the quotient map $\pi$ to the covering of $\Sigma^{ss}$ in Theorem~\ref{thm:cover} yields
\[
  \partial\mathcal{M}^{\mathrm{GIT}}
  = \pi(\Sigma^{ss})
  = \bigcup_{k=1}^{M} \pi\bigl(X_k \cap \PP(W)^{ss}\bigr)
  = \bigcup_{k=1}^{M} \Phi_k.
\]
By Lemma~\ref{lem:zero-weight-reduction-in-quotient}, each $\Phi_k$ coincides with $\overline{\pi(\PP(W_k)\cap \PP(W)^{ss})}$ and is a closed irreducible subset of $\partial\mathcal{M}^{\mathrm{GIT}}$ (whenever it is nonempty). Since $\partial\mathcal{M}^{\mathrm{GIT}}$ is a finite union of these closed irreducible subsets, every irreducible component of $\partial\mathcal{M}^{\mathrm{GIT}}$ must be equal to $\Phi_k$ for some $k$.
\end{proof}

\section{Uniformity of the global minimal exponent for general strictly semistable members}\label{sec:main}

Let $X = V(f) \subset \PP^{n}$ be a hypersurface of degree $d \ge 3$.
For $p\in X$, choose a local defining equation $g$ of $X$ in $\PP^n$ at $p$, and let $b_{g,p}(s)$ be its local Bernstein--Sato polynomial.
The local minimal exponent $\widetilde{\alpha}_{X,p}$ is the negative of the largest root of the reduced Bernstein--Sato polynomial $b_{g,p}(s)/(s+1)$, with the convention that $\widetilde{\alpha}_{X,p}=\infty$ when this polynomial is $1$.
In particular, $\widetilde{\alpha}_{X,p}=\infty$ at smooth points, and
\[
  \lct_p(\PP^n,X)=\min\{1,\widetilde{\alpha}_{X,p}\};
\]
see \cite{MP20a}.
The global minimal exponent of $X$ is defined by
\[
  \widetilde{\alpha}(X) := \min_{p \in X} \widetilde{\alpha}_{X,p} = \min_{p \in \Sing(X)} \widetilde{\alpha}_{X,p}
\]
with the convention that the minimum over the empty set is $\infty$ (see \cite{MP20a, Par25}). Recall the fundamental stability criterion due to S.~G.~Park.

\begin{samepage}
\begin{thm}[Park {\cite[Theorem~A]{Par25}}]\label{thm:park}
Let $X \subset \PP^{n}$ be a hypersurface of degree $d \ge 3$.
\begin{description}
\item[(1)] If $\widetilde{\alpha}(X) > \frac{n+1}{d}$, then $X$ is GIT stable.
\item[(2)] If $\widetilde{\alpha}(X) \ge \frac{n+1}{d}$, then $X$ is GIT semistable.
\end{description}
In particular, if $X$ is not GIT stable, then $\widetilde{\alpha}(X) \le \frac{n+1}{d}$.
\end{thm}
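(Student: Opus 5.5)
This statement is quoted from Park's work, so within the paper it is simply cited from \cite[Theorem~A]{Par25}. The structural point is the relation
\[
  \widetilde{\alpha}_v(f)=\min\Bigl\{\widetilde{\alpha}(X),\tfrac{n+1}{d}\Bigr\}
\]
between the minimal exponent of the affine cone $C(X)\subset\A^{n+1}$ at its vertex $v$ and the global minimal exponent of $X$. This is \cite[Theorem~6.1]{Par25}, which I take as given. The plan is to combine it with a comparison between minimal exponents and GIT weights along one-parameter subgroups. I would argue by contraposition.

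First, for (1), suppose $X=V(f)$ is not stable. By Theorem~\ref{numerical}, after a coordinate change there is a nontrivial 1-PS $\mathbf r\in\Z^{n+1}_{(0)}$ with $\Supp(f)\subseteq I(\mathbf r)_{\ge0}$. Shift the weights to $\mathbf w=\mathbf r+c(1,\dots,1)$ with $c>0$ chosen so that $\mathbf w$ is strictly positive. Then every monomial of $f$ has $\mathbf w$-weight at least $cd$, while $\sum_j w_j=c(n+1)$.

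Next, apply the standard weighted bound on the minimal exponent: if all monomials of $f$ have $\mathbf w$-weight $\ge N$, then
\[
  \widetilde{\alpha}_0(f)\le \frac{\sum_j w_j}{N}.
\]
This holds by semicontinuity under the degeneration $f\rightsquigarrow$ the $\mathbf w$-initial form, together with the weighted-homogeneous case, where the minimal exponent at $0$ is at most $\sum w_j/N$ (the root $-\sum w_j/N$ comes from the Euler-type vector field). With our weights this gives $\widetilde{\alpha}_v(f)\le (n+1)/d$.

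Strict inequality is the main obstacle. The naive bound only yields equality, and $\widetilde{\alpha}_v(f)\le\frac{n+1}{d}$ holds trivially for every cone anyway. Non-stability must therefore enter through the direction $\mathbf r$. My first attempt would be to perturb: take $\mathbf w_\epsilon=(1,\dots,1)+\epsilon\mathbf r$. Monomials of $f$ then have $\mathbf w_\epsilon$-weight $\ge d$ while $\sum_j (w_\epsilon)_j=n+1$. So the initial-form degeneration shows that the cone over the $\mathbf r$-weight-zero part already forces the relevant root.

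Then I would transfer this to $\widetilde{\alpha}(X)$ itself. The 1-PS limit $f_0$ is a cone-like degeneration, and any point $p$ of $X$ in the locus where $\lambda_{\mathbf r}$ contracts (e.g. the coordinate point $[0{:}\cdots{:}1]$ with the most negative weight) has $\widetilde{\alpha}_{X,p}\le$ a weighted bound computed in affine coordinates there. Here the weights $r_j-r_n\ge0$ give $\sum_j(r_j-r_n)/(\text{min weight})\le(n+1)/d$, using $\mathbf r\cdot\mathbf i\ge0$. This gives $\widetilde{\alpha}(X)\le\frac{n+1}{d}$, which is (1) in contrapositive form.

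For (2), the same argument with $\Supp(f)\subseteq I(\mathbf r)_{>0}$ makes the inequality strict. The minimal weight then exceeds the threshold, giving $\widetilde{\alpha}(X)<\frac{n+1}{d}$. The final ``in particular'' clause is just (1) restated.
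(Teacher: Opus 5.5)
\paragraph{Verdict: there is a genuine gap.}
The paper gives no proof of this statement; it is imported from \cite[Theorem~A]{Par25}. Your independent proof breaks down at the step that carries all the content: the weighted bound $\widetilde{\alpha}_0(g)\le\sum_j w_j/N$, applied at the coordinate point $p=[0:\cdots:0:1]$.

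\paragraph{The weighted bound is false as stated.}
For a smooth germ, e.g.\ $g=x_1$ with $\mathbf w=(1,\ldots,1)$ and $N=1$, the right-hand side is finite while $\widetilde{\alpha}_0(g)=\infty$. Even for weighted homogeneous $h$, the Euler field does not force $-\sum_j w_j/N$ to be a root of $b_h(s)/(s+1)$; for smooth $h$ this polynomial is $1$.

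Your justification also points the wrong way. Lower semicontinuity in the family $t^{-N}g(t^{w_1}x_1,\ldots,t^{w_m}x_m)$, whose special fibre is $\mathrm{in}_{\mathbf w}(g)$, yields $\widetilde{\alpha}_0(\mathrm{in}_{\mathbf w}g)\le\widetilde{\alpha}_0(g)$. That is an upper bound for the initial form, not for $g$.

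The argument you are imitating works for $\lct$ because $\lct_p\le 1$ always and $\lct_p\le\sum_j w_j/N$ is a valuative statement. Neither feature is available for $\widetilde{\alpha}>1$.

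\paragraph{The failure occurs in your setting.}
The contracted coordinate point can be a smooth point of $X$. Take $n=d=3$ and $\mathbf r=(11,-3,-3,-5)$. Then $I(\mathbf r)_{\ge0}=I(\mathbf r)_{>0}=\{\mathbf i\in I\mid i_0\ge1\}$, so $f=x_0Q$ with $Q$ a general quadric is unstable. The point $p=[0:0:0:1]$ is the unique coordinate point of minimal weight. But $\partial f/\partial x_0=Q$ does not vanish at $p$, so $\widetilde{\alpha}_{X,p}=\infty$. Your weights there are $(16,2,2)$ with minimal monomial weight $16$, so you would claim $\widetilde{\alpha}_{X,p}\le 5/4<4/3$. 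This contradicts both your non-strict bound for (1) and your strict bound for (2).

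More generally, $\mathbf r=(n,-1,\ldots,-1)$ and $f=x_0g$ with $g$ general of degree $d-1$ show the same phenomenon whenever $d\le n+1$. The small values of $\widetilde{\alpha}$ lie elsewhere; here they lie on the conic $x_0=Q=0$, where $\widetilde{\alpha}=1$. Nothing in your proposal locates such points.

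\paragraph{The announced formula is never used.}
You never use $\widetilde{\alpha}_v(f)=\min\{\widetilde{\alpha}(X),\frac{n+1}{d}\}$. For (2) it suggests a better route. If $\Supp(f)\subseteq I(\mathbf r)_{>0}$, then $f$ has $\mathbf w_\epsilon$-order at least $d+\epsilon$ at the vertex, where $f$ is singular of multiplicity $d$. A correctly formulated weighted upper bound for singular germs would then give $\widetilde{\alpha}_v(f)<\frac{n+1}{d}$, hence $\widetilde{\alpha}(X)<\frac{n+1}{d}$. That bound would still have to be proved by means other than semicontinuity.

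For (1), non-stability yields only the vacuous $\widetilde{\alpha}_v(f)\le\frac{n+1}{d}$, as you observe yourself. The perturbation paragraph supplies nothing beyond this, so the main assertion of Park's theorem remains unproved by your argument.
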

\end{samepage}

We now prove the main result of this paper, which establishes the uniform sharpness of Park's inequality across all maximal strictly semistable families.

\begin{thm}\label{thm:main}
Let $I(\mathbf{r}_{1})_{\ge 0}, \ldots, I(\mathbf{r}_{M})_{\ge 0}$ be the maximal strictly semistable supports with respect to $T$ obtained in Section~\ref{sec:covering-theorem}. For each $k \in \{1, \ldots, M\}$ and for a general polynomial $f \in W_{k}$ (resp.\ a general polynomial $f \in V_{k}$), the hypersurface $X = V(f) \subset \PP^{n}$ is strictly semistable and satisfies
\[
  \widetilde{\alpha}(X) = \frac{n+1}{d}.
\]
Consequently, $\widetilde{\alpha}(X) = \frac{n+1}{d}$ holds for a general member $X$ of every irreducible component of $\Sigma^{ss}$. Moreover, for every irreducible component $B$ of the GIT boundary $\partial\mathcal{M}^{\mathrm{GIT}}$, there exist an index $k$ and a nonempty open subset $B^{\circ}\subseteq B$ such that every $b\in B^{\circ}$ admits a strictly semistable representative $[f]\in\PP(W_k)$ with $\pi([f])=b$ and $\widetilde{\alpha}(V(f))=\frac{n+1}{d}$.
\end{thm}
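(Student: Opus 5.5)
The plan is to handle $W_k$ and $V_k$ simultaneously. Let $S$ denote either $I(\mathbf r_k)_{=0}$ or $I(\mathbf r_k)_{\ge0}$, and let $f=\sum_{\mathbf i\in S}a_{\mathbf i}x^{\mathbf i}$ with general coefficients.

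\emph{Strict semistability.} We have $\Supp(f)\subseteq I(\mathbf r_k)_{\ge0}$, so $f$ is not stable with respect to $T$, hence not stable by Theorem~\ref{numerical}. Semistability will follow from the lower bound below together with Theorem~\ref{thm:park}(2). This avoids checking condition (2) of Theorem~\ref{numerical} for all $\sigma$.

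\emph{Lower bound at the vertex.} Consider $f$ as a function on $\A^{n+1}$ and its value at the origin $v$. Let $\mathfrak a$ be the monomial ideal generated by $\{x^{\mathbf i}\mid \mathbf i\in S\}$. By the theorem of Musta\c{t}\u{a}--Popa \cite{MP20b}, for general coefficients the minimal exponent of $f$ at $v$ equals $\min\{\lct_v(\mathfrak a),\ldots\}$; more precisely, it is at least the value given by the Newton polyhedron, namely $\lct_v(\mathfrak a)$ computed via Howald \cite{How01}.

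Next I compute $\lct_v(\mathfrak a)=1/\tau$, where $\tau\mathbf 1$ is the point where the diagonal meets $\partial P(\mathfrak a)$, with $P(\mathfrak a)=\mathrm{Conv}(S)+\R^{n+1}_{\ge0}$. By Lemma~\ref{lem:barycenter}, $\eta\in\mathrm{Conv}(I(\mathbf r_k)_{=0})\subseteq\mathrm{Conv}(S)$, so $\eta\in P(\mathfrak a)$. Every point of $P(\mathfrak a)$ has coordinate sum at least $d$, so $t\mathbf 1\notin P(\mathfrak a)$ for $t<d/(n+1)$. Hence $\tau=d/(n+1)$ and $\lct_v(\mathfrak a)=(n+1)/d$. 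Note that $(n+1)/d<\infty$, and since the exponent is computed from the polyhedron the relevant bound is $\widetilde\alpha_v(f)\ge (n+1)/d$.

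\emph{Passing from the vertex to $\PP^n$.} Take $p\in\Sing(X)$ and the ruling line $\ell$ through $v$ over $p$. Near a point $q\in\ell\setminus\{v\}$, the cone $C(X)$ is locally the product of $(X,p)$ with a smooth factor, so smooth pullback gives $\widetilde\alpha_{q}(f)=\widetilde\alpha_{X,p}$. Because $f$ is homogeneous, $\widetilde\alpha_q(f)$ is constant on $\ell\setminus\{v\}$, and $v$ lies in the closure of this set. Lower semicontinuity \cite{MP20a} then gives $\widetilde\alpha_{X,p}\ge\widetilde\alpha_v(f)\ge(n+1)/d$. Alternatively, one can invoke Park's formula $\widetilde\alpha_v(f)=\min\{\widetilde\alpha(X),(n+1)/d\}$ directly.

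This yields $\widetilde\alpha(X)\ge(n+1)/d$. Theorem~\ref{thm:park}(2) then makes $X$ semistable, so $X$ is strictly semistable. The last sentence of Theorem~\ref{thm:park} gives the reverse inequality, and therefore equality holds.

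\emph{Consequences.} By Theorem~\ref{thm:cover}, each irreducible component of $\Sigma^{ss}$ is contained in some $X_k\cap\PP(W)^{ss}$. Being a component of the finite union, it equals such an $X_k\cap\PP(W)^{ss}$, which is irreducible. The condition $\widetilde\alpha=(n+1)/d$ holds on a dense open set of $\PP(V_k)$. Since $\widetilde\alpha$ is $G$-invariant, it also holds on a dense subset of $G\cdot\PP(V_k)$, and this subset contains a dense open set by constructibility of the minimal exponent in families.

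For $B$, Theorem~\ref{thm:cover-zero-weight} gives $B=\overline{\pi(\PP(W_k)\cap\PP(W)^{ss})}$. Let $U\subseteq\PP(W_k)$ be the dense open set where the equality holds and $X$ is strictly semistable. Then $\pi(U)$ is constructible (Chevalley) and dense in $B$, so it contains a nonempty open $B^\circ$. Finally, $\lambda=\lambda_{\mathbf r_k}$ fixes each $f\in W_k$.

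\emph{Main obstacle.} I expect the main obstacle to be the precise use of \cite{MP20b}. That result gives the minimal exponent of a nondegenerate, general combination of monomials in terms of the Newton polyhedron, but only when the value is at most $1$ or under nondegeneracy hypotheses. Here the ideal $\mathfrak a$ need not be $\mathfrak m$-primary. I would first verify that general coefficients give Newton nondegeneracy on every compact face. I would then use the version that bounds $\widetilde\alpha_v(f)$ below by $\lct$ of the polyhedron, namely the minimum of $1/\tau$ over the diagonal, even when the value exceeds $1$.
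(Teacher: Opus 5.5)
Your proposal is correct and follows essentially the same route as the paper. The paper applies Musta\c{t}\u{a}--Popa plus Howald at the cone vertex, where the barycenter forces $\tau=\frac{d}{n+1}$, transfers this to $X$ by smooth pullback and lower semicontinuity along the rulings, uses Park's theorem in both directions with non-stability read off from the support, and concludes with a Chevalley/density argument combined with Theorems~\ref{thm:cover} and~\ref{thm:cover-zero-weight}. The differences are minor. You use only the inequality $\widetilde{\alpha}_v(f)\ge\lct_v(\mathfrak a)$, which suffices since the upper bound comes from Park. The paper instead cites the Musta\c{t}\u{a}--Popa result (their Definition~5.3 and Example~5.13) as giving the equality for general combinations of the generators of an ideal of order $>1$, with no $\mathfrak m$-primary, isolated-singularity or nondegeneracy hypothesis; that citation is how it disposes of your anticipated ``main obstacle''. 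Finally, where you appeal to constructibility of the minimal exponent in families, the paper simply applies Chevalley's theorem to $G\cdot U_k^V$.
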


\begin{proof}
Fix $k \in \{1, \ldots, M\}$ and let $S = I(\mathbf{r}_{k})_{=0}$ (resp.\ $S = I(\mathbf{r}_{k})_{\ge 0}$). Consider the monomial ideal
\[
  \mathfrak{a}_{S} := \bigl(x^{\mathbf{i}} \;\big|\; \mathbf{i} \in S\bigr) \subset \C[x_{0}, \ldots, x_{n}].
\]
For a polynomial $f = \sum_{\mathbf{i} \in S} a_{\mathbf{i}} x^{\mathbf{i}}$, let $C(X) = V(f) \subset \A^{n+1}$ be the affine cone over $X = V(f) \subset \PP^{n}$ with vertex $v = (0, \ldots, 0) \in \A^{n+1}$. Since $\mathfrak{a}_{S}$ is generated by monomials of degree $d \ge 3 > 1$, the order of $\mathfrak{a}_{S}$ at $v$ is $d > 1$. By a theorem of Musta\c{t}\u{a} and Popa \cite[Definition~5.3 and Example~5.13]{MP20b}, for a general linear combination $f$ of the monomial generators of $\mathfrak{a}_{S}$, the local minimal exponent $\widetilde{\alpha}_{v}(f)$ of the affine cone $C(X)$ at the vertex $v$ coincides with the log canonical threshold of $\mathfrak{a}_{S}$ at $v$:
\begin{equation}\label{eq:mp-monomial}
  \widetilde{\alpha}_{v}(f) = \lct_{v}(\mathfrak{a}_{S}).
\end{equation}
The cited result requires neither that $\mathfrak a_S$ be primary to the
maximal ideal at $v$ nor that the cone have an isolated singularity there.
The threshold in \eqref{eq:mp-monomial} is that of an ideal and can exceed $1$.

By Howald's formula \cite{How01} (see also \cite{CLS11}), the log canonical threshold of the monomial ideal $\mathfrak{a}_{S}$ is given by
\[
  \lct_{v}(\mathfrak{a}_{S}) = \frac{1}{\tau},
  \qquad \text{where} \quad
  \tau = \min\bigl\{\, t > 0 \;\big|\; t(1, 1, \ldots, 1) \in P(\mathfrak{a}_{S}) \,\bigr\},
\]
and $P(\mathfrak{a}_{S}) = \mathrm{Conv}(S) + \R^{n+1}_{\ge 0}$ is the Newton polyhedron of $\mathfrak{a}_{S}$. Since $\wt(\mathbf{i}) = d$ for all $\mathbf{i} \in S$, any point $\mathbf{y} \in P(\mathfrak{a}_{S})$ satisfies $\wt(\mathbf{y}) \ge d$. In particular, if $t(1, 1, \ldots, 1) \in P(\mathfrak{a}_{S})$, then
\[
  t(n+1) = \wt\bigl(t(1, 1, \ldots, 1)\bigr) \ge d,
  \qquad \text{so} \quad
  t \ge \frac{d}{n+1}.
\]
On the other hand, since $I(\mathbf{r}_{k})_{\ge 0}$ is strictly semistable with respect to $T$, Lemma~\ref{lem:barycenter} shows that the barycenter $\eta = \frac{d}{n+1}(1, 1, \ldots, 1)$ satisfies
\[
\eta\in\mathrm{Conv}(I(\mathbf{r}_{k})_{=0})
\subseteq\mathrm{Conv}(S)\subset P(\mathfrak{a}_{S}).
\]
Therefore, the diagonal ray $t(1, 1, \ldots, 1)$ enters $P(\mathfrak{a}_{S})$ precisely at the barycenter $\eta$, which gives $\tau = \frac{d}{n+1}$. Combining this with \eqref{eq:mp-monomial}, we obtain
\begin{equation}\label{eq:vertex-minexp}
  \widetilde{\alpha}_{v}(f) = \lct_{v}(\mathfrak{a}_{S}) = \frac{n+1}{d}.
\end{equation}

Next, we compare $\widetilde{\alpha}(X)$ with $\widetilde{\alpha}_{v}(f)$. For any singular point $p \in \Sing(X)$, choose a point $\tilde{p} \in C(X) \setminus \{v\}$ lying over $p$. The natural map $\A^{n+1}\setminus\{v\}\to\PP^n$ is a smooth $\G_m$-bundle, and its restriction $C(X)\setminus\{v\}\to X$ is also a $\G_m$-bundle. Thus the analytic germ $(C(X), \tilde{p})$ is isomorphic to the product $(X, p) \times (\A^{1}, 0)$, and $\widetilde{\alpha}_{X,p} = \widetilde{\alpha}_{C(X),\tilde{p}}$ by the invariance of the minimal exponent under smooth pullbacks \cite{MP20a}. Moreover, since $f$ is homogeneous, the local minimal exponent is constant along each scalar $\G_m$-orbit in $C(X)\setminus\{v\}$, and each such orbit contains $v$ in its closure. By the lower semicontinuity of the minimal exponent \cite[Theorem~E(2)]{MP20a}, we have
\[
  \widetilde{\alpha}_{X,p} = \widetilde{\alpha}_{C(X),\tilde{p}} \ge \widetilde{\alpha}_{v}(f) = \frac{n+1}{d}
\]
for all $p \in \Sing(X)$, which yields the lower bound
\begin{equation}\label{eq:lower-bound}
  \widetilde{\alpha}(X) \ge \frac{n+1}{d}.
\end{equation}

By Theorem~\ref{thm:park}(2), the inequality \eqref{eq:lower-bound} implies that $X$ is GIT semistable. On the other hand, since $\Supp(f) \subseteq S \subseteq I(\mathbf{r}_{k})_{\ge 0}$, Theorem~\ref{numerical} implies that $X$ is not GIT stable. Hence $X$ is strictly semistable, and Theorem~\ref{thm:park}(1) gives the reverse inequality $\widetilde{\alpha}(X) \le \frac{n+1}{d}$ (in particular, $\Sing(X) \neq \varnothing$). Consequently, we conclude that
\[
  \widetilde{\alpha}(X) = \frac{n+1}{d}.
\]
Finally, for each $k$, choose nonempty open subsets $U_k^V\subseteq\PP(V_k)$ and $U_k^W\subseteq\PP(W_k)$ on which the asserted strict semistability and equality hold. By Chevalley's theorem \cite[Th\'eor\`eme~1.8.4]{EGAIV1}, $G\cdot U_k^V$ is constructible. It is dense in $X_k$, since $G\times U_k^V$ is dense in $G\times\PP(V_k)$, and hence contains a nonempty open subset of $X_k$. The minimal exponent is invariant under changes of coordinates, so the equality holds throughout $G\cdot U_k^V$. Each nonempty $X_k\cap\PP(W)^{ss}$ is a closed irreducible subset of $\Sigma^{ss}$. The finite covering in Theorem~\ref{thm:cover} therefore gives the assertion for every irreducible component of $\Sigma^{ss}$.

Similarly, $\pi(U_k^W)$ is constructible and dense in $\Phi_k=\overline{\pi(\PP(W_k)\cap\PP(W)^{ss})}$, so it contains a nonempty open subset of $\Phi_k$. By Theorem~\ref{thm:cover-zero-weight}, every irreducible component $B$ of the GIT boundary equals some $\Phi_k$. Taking a nonempty open subset $B^{\circ}\subseteq\pi(U_k^W)$ proves the final assertion.
\end{proof}

\begin{remark}
The generality assumption in Theorem~\ref{thm:main} is essential.
The chordal cubic threefold $Y\subset\PP^4$ is strictly semistable
with closed orbit and satisfies
\[
  \widetilde{\alpha}(Y)=\frac32<\frac53,
\]
as recalled in \cite[Remark~7.6(2)]{Par25}.
\end{remark}

\end{document}